\newtheorem{theorem}{Theorem}[section]
\newtheorem{lemma}[theorem]{Lemma}
\newtheorem{remark}[theorem]{Remark}
\newtheorem{assumption}[theorem]{Assumption}
\title{Posterior contraction rates for non-parametric state and drift estimation}
\author{Sebastian Reich and Paul Rozdeba}
\thanks{Institute of Mathematics, University of Potsdam, Karl-Liebknecht-Str.~24/25, D-14476 Potsdam, Germany, e-mail: \tt{sebastian.reich@uni-potsdam.de}}
\date{\today}                                           
\begin{document}
\maketitle

%
%
%
%
%


%
%
%

\begin{abstract} We consider a combined state and drift estimation problem for the linear stochastic heat equation.
The infinite-dimensional Bayesian inference problem is formulated in terms of the Kalman--Bucy filter over an extended state space, and its long-time asymptotic properties are studied. Asymptotic posterior contraction rates in the unknown drift function are the main contribution of this paper. Such rates have been studied before for stationary non-parametric Bayesian inverse problems, and here we demonstrate the consistency of our time-dependent formulation with these previous results building upon scale separation and a slow manifold approximation.
\end{abstract}


%
%
\section{Introduction}
%
%

In this paper, we consider the combined state and drift estimation problem for the stochastic heat equation
\begin{equation}\label{eq:SEE}
{\rm d} u_t = \partial_x^2 u_t{\rm d}t + f^\ast{\rm d}t + \gamma^{1/2}{\rm d}w_t
\end{equation}
over the domain $\Omega = (0,\pi)$ and for $t\ge 0$, with zero Dirichlet boundary conditions $u_t(0) = u_t(\pi) = 0$ and zero initial condition $u_0(x) = 0$ for all $x \in \Omega$, without loss of generality. We denote by $w_t$ a cylindrical Wiener process where $\gamma >0$ characterises the strength of the model error. 
The unknown drift function $f^\ast$ is assumed to belong to a  Sobolev space of appropriate regularity. We specify the properties of $w_t$ and $f^*$ more precisely in Section \ref{sec:math} below.

Linear stochastic partial differential equations (SPDEs) of the form (\ref{eq:SEE}) are well understood from analytical and numerical perspectives. See, for example, \cite{DaPrato_SPDE_book,Hairer2009_notes,LordSPDEbook}. In this paper, we  specifically focus on the estimation of the states $u_t$, $t>0$, as well as the unknown drift function $f^\ast$ from noisy state measurements given by
\begin{equation} \label{eq:measurement}
{\rm d}y_t = u_t{\rm d}t + \rho^{1/2} {\rm d}v_t,
\end{equation} 
where $y_0(x) = 0$ for all $x\in \Omega$, $\rho>0$ denotes the strength of the measurement noise, and $v_t$ is another cylindrical Wiener process independent of the model error $w_t$.

Our focus is on asymptotic posterior contraction rates, which have been widely studied in the context of non-parametric
stationary inverse problems. See, for example, \cite{knapik2011,GineNickl2016book,Ghosal2017book}. We note that pure parameter estimation problems for SPDEs given exact observations of the states are also well-studied. See, for example,  \cite{Cialenco2018_review} for a recent survey. First steps towards non-parametric time-dependent inverse problems have been taken in \cite{YanThesis2019}. However, state estimation for a given drift function $f^\ast$ is framed as an inference problem over a fixed time interval resulting in a smoothing problem, whereas filtering problems are the focus of this paper. Conversely, estimation of a drift function $f^\ast$ is analyzed under the assumption of perfectly observed model states. Similarly, parameter estimation for a class of linear SPDEs from noiseless local state measurements has been considered in \cite{Altmeyer2019}, whereas we assume noisy observations of the full states throughout this paper.

In this paper, the infinite-dimensional, time-dependent combined state and drift estimation problem is formulated and analysed in terms of the well-known Kalman--Bucy filter equations for the posterior mean and covariance operator in an augmented state space. See, for example, \cite{sr:jazwinski,sr:simon,curtain1975} for an introduction to the Kalman--Bucy filter in finite and infinite dimensions. More recently, the Kalman--Bucy filter has been reformulated as a set of mean-field equations termed the ensemble Kalman--Bucy filter. See, for example \cite{sr:br11,sr:dWRS18,nrr2019}. These mean-field equations allow for a concise formulation of our time-dependent estimation problem. Furthermore, although not considered in this paper, the Kalman--Bucy mean-field equations can be generalised to non-Gaussian estimation problems \citep{sr:meyn13,sr:TdWMR17}. A key observation arising from the analysis of the Kalman--Bucy mean-field equations is the time-scale separation in the dynamics of the state and parameter variables, allowing us to apply the concept of slow manifolds \citep{verhulst07}.

The remainder of this paper is structured as follows. A mathematical formulation of the combined state and drift estimation problem in terms of Fourier modes is provided in Section \ref{sec:math}. Furthermore, the time-dependent estimation problem is formulated in terms of Kalman--Bucy mean-field equations. This formulation is applied to a stationary drift estimation problem in Section \ref{sec:drift}. Asymptotic posterior contraction rates are then derived and compared to existing results for a closely related non-parametric Bayesian inference problem. These results are extended to the combined state and drift estimation problem in Section \ref{sec:state_drift}. The essential analysis of the single-mode Kalman--Bucy filter equations is carried out in Section \ref{sec:single_mode}. A numerical exploration of the combined state and parameter estimation problem is carried out in Section \ref{sec:numerics}. The paper concludes with Section \ref{sec:conclusions}.

%
%
\section{Mathematical problem formulation} \label{sec:math}
%
%

In this paper, we analyse the state and drift estimation problem using a spectral representation in terms of Fourier modes. We provide the essential background in this section. It is well known that solutions of (\ref{eq:SEE}) can be expanded in Fourier modes $\{\mathcal{U}_t(k)\}_{k\ge 1}$, that is,
\begin{equation}
u_t(x) = \sum_{k\ge 1} \mathcal{U}_t(k) \sin (kx).
\end{equation} 
Because of (\ref{eq:SEE}), the Fourier modes obey the stochastic differential equations (SDEs)
\begin{equation} \label{eq:model1}
{\rm d}\mathcal{U}_t(k) = -k^2 \mathcal{U}_t(k) {\rm d}t + \mathcal{F}^\ast (k){\rm d}t + \gamma^{1/2} {\rm d}\mathcal{W}_t(k), \qquad k\ge 1,
\end{equation}
where 
\begin{equation}
w_t(x) = \sum_{k\ge 1} \mathcal{W}_t(k) \sin (kx)
\end{equation}
denotes the Fourier representation of the cylindrical Wiener process $w_t$.
Here $\mathcal{F}^\ast(k)$, $k\ge 1$, denote the Fourier coefficients of $f^\ast$, and $\mathcal{W}_t(k)$ independent standard Brownian motions. The initial conditions are $\mathcal{U}_0(k) =0$ for all $k\ge 1$. Therefore, (\ref{eq:SEE}) can be viewed as a stochastic evolution equation on the separable Hilbert space of square integrable functions $u:\Omega \to \mathbb{R}$  of sufficient spatial regularity with zero boundary conditions. 

The measurement model (\ref{eq:measurement}) can also be transformed into Fourier space, yielding
\begin{equation}\label{eq:model2}
{\rm d}\mathcal{Y}_t(k) = \mathcal{U}_t(k){\rm d}t + \rho^{1/2} {\rm d}\mathcal{V}_t(k),\qquad k\ge 1,
\end{equation}
with $\mathcal{V}_t(k)$ representing independent standard Brownian motions, independent of the model errors $\mathcal{W}_t(k')$ for all $k'$, that is,
\begin{equation} \label{eq:obs_value_t}
v_t(x) = \sum_{k\ge 1} \mathcal{V}_t(k) \sin (kx).
\end{equation}
Please also note that measurement processes defined by (\ref{eq:measurement}) and (\ref{eq:model2}), respectively, are related by
\begin{equation}
y_t(x) = \sum_{k\ge 1} \mathcal{Y}_t(k) \sin (kx).
\end{equation}
Throughout this paper, we will exclusively work with the formulation (\ref{eq:model1}) and (\ref{eq:model2}) of our state and drift estimation problem in Fourier space. We now proceed to formulate a continuous-time Bayesian inference framework for this problem, which is used to derive asymptotic posterior contraction rates in Section \ref{sec:state_drift}.

Let us denote the joint posterior distribution in the state $\mathcal{U}_t(k)$ and the drift $\mathcal{F}^\ast$, 
conditioned on the observed data $\mathcal{Y}_{(0,t]}(k)$ up to time $t$, $k\ge 1$, by $\pi_t^{(k)}$. 
Here $\mathcal{Y}_{(0,t]}(k)$ represents a path of the observation process up to time $t>0$, whereas $\mathcal{Y}_{t}(k)$ in (\ref{eq:obs_value_t}) denotes its instantaneous value at time $t$. By construction, the equivalent process in function space is given by
\begin{align}
  y_{(0,t]}(x) = \sum_{k \geq 1} \mathcal{Y}_{(0,t]}(k) \sin(kx).
\end{align}
We emphasize that this joint posterior distribution is Gaussian due to the linearity of (\ref{eq:model1}) and (\ref{eq:model2}), respectively.

We next reformulate this Bayesian inference problem in terms of pairs of new random variables $(U_t(k),F_t(k))$, $k\ge 1$,
which satisfy the mean-field Kalman--Bucy equations \citep{sr:dWRS18,nrr2019}
\begin{subequations} \label{eq:MFKB}
\begin{align}
{\rm d}U_t(k) &= - k^{2}U_t(k){\rm d}t + F_t(k){\rm d}t + \gamma^{1/2} {\rm d}W_t(k) - K_t^u(k) {\rm d} I_t(k), \\
{\rm d} F_t(k) &= - K_t^f(k) {\rm d}I_t(k),
\end{align}
\end{subequations}
for a given observation path $\mathcal{Y}_{(0,t]}(k)$ with the innovation $I_t(k)$ that solves
\begin{equation}
{\rm d}I_t(k) := \frac{1}{2} \left(U_t(k) + \mathbb{E}[U_t(k)|\mathcal{Y}_{(0,t]}(k)]\right){\rm d}t - {\rm d}\mathcal{Y}_t(k),
\end{equation}
and with Kalman gains
\begin{equation}
K_t^u(k) := \rho^{-1} \mathbb{E}[\Delta U_t(k)^2|\mathcal{Y}_{(0,t]}(k)], \quad K_t^f(k) := \rho^{-1} \mathbb{E}[\Delta U_t(k) \Delta F_t(k)|\mathcal{Y}_{(0,t]}(k)],
\end{equation}
where 
\begin{equation}
\Delta U_t(k) := U_t(k) - \mathbb{E}[U_t(k)|\mathcal{Y}_{(0,t]}(k)]
\end{equation} 
and 
\begin{equation}\Delta F_t(k) := F_t(k) - \mathbb{E}[F_t(k)|\mathcal{Y}_{(0,t]}(k)].
\end{equation}

The stochastic process defined by the  Kalman--Bucy equations (\ref{eq:MFKB}) satisfies
\begin{equation}
(U_t(k),F_t(k)) \sim \pi_t^{(k)}.
\end{equation}
In other words, upon defining the conditional posterior expectation value
\begin{equation}
\pi^{(k)}_t[g] = \int_{\mathbb{R}^2} g(u,f)\,\pi^{(k)}_t(u,f)\,{\rm d}u\,{\rm d}f
\end{equation}
for any measurable function $g(u,f)$, it holds, for example, that the mean 
of $U_t(k)$, denoted by $\mathbb{E}[U_t(k)|\mathcal{Y}_{(0,t]}(k)]$, is equal to $\pi_t^{(k)}[u]$, and 
\begin{equation}
K_t^u(k) = \rho^{-1} \pi_t^{(k)}\left[ \left(u-\pi_t^{(k)}[u]\right)^2\right] = \frac{\sigma_t^u(k)}{\rho}
\end{equation}
where $\sigma_t^u(k)$ is the variance of $U_t(k)$. We will use the shorthand $\mathbb{E}[U_t(k)]$ instead of
$\mathbb{E}[U_t(k)|\mathcal{Y}_{(0,t]}(k)]$ from now on in order to simplify notations.

The prior assumptions are that $U_0(k) = 0$ almost surely, and that the 
$F_0(k)$, $k\ge 1$, are independent Gaussian random variables with mean zero and variance 
\begin{equation} \label{eq:prior_f}
\sigma_0^f(k) = k^{-2\alpha - 1}
\end{equation} 
for a suitable $\alpha > 0$. Since (\ref{eq:model1}) and (\ref{eq:model2}) are linear in the unknowns, $U_t(k)$ and $F_t(k)$ remain Gaussian under the evolution equations (\ref{eq:MFKB}), and we investigate their behaviour in terms of their mean and covariance for $t \gg 1$. We further assume that the true drift function $f^\ast$ has Sobolev regularity $\beta > 0$, that is
\begin{equation} \label{eq:drift_sobolev}
\sum_{k\ge 1} \mathcal{F}^\ast(k)^2 k^{-2\beta} < \infty.
\end{equation}
These choices correspond to a standard setting for Bayesian non-parametric inference \citep{knapik2011,GineNickl2016book,Ghosal2017book}.
For example, (\ref{eq:drift_sobolev}) is satisfied if
\begin{equation} \label{eq:drift_regularity}
\mathcal{F}^\ast(k) = c_k k^{-\beta - 1/2 - \delta}
\end{equation}
for any $\delta >0$ and any sequence of coefficients $c_k$ with bounded $|c_k|$ as $k\to \infty$. These coefficients can, for example, be realisations of i.i.d.\ uniform random variables from the interval $[-1,1]$.

In addition to an analysis of the continuous-time Bayesian inference problem (\ref{eq:MFKB}), we also study the (frequentist) 
dependence of the estimators (Bayesian posterior means)
\begin{equation} \label{eq:estimators}
\overline{U}_t(k) := \mathbb{E}[U_t(k)] = \pi_t^{(k)}[u], \qquad
\overline{F}_t(k) := \mathbb{E}[F_t(k)] = \pi_t^{(k)}[f],
\end{equation}
on the random observation process $\mathcal{Y}_{(0,t]}(k)$ in Section \ref{sec:state_drift}. We denote the expectation values of (\ref{eq:estimators}) with respect to $\mathcal{Y}_{(0,t]}(k)$ by
\begin{equation}
m_t^u(k):=\mathbb{E}^\ast [\overline{U}_t(k)] \quad  \mbox{and} \quad m_t^f(k):=\mathbb{E}^\ast [\overline{F}_t(k)]. 
\end{equation}
These two quantities characterise the systematic bias in the estimators (\ref{eq:estimators}), while the implied variances 
\begin{equation}
p_t^u (k) := \mathbb{E}^\ast \left[ \left(\overline{U}_t(k)-m_t^u(k)\right)^2\right], \qquad 
p_t^f(k) := \mathbb{E}^\ast \left[ \left(\overline{F}_t(k)-m_t^f(k)\right)^2\right]
\end{equation}
are a measure of the \lq frequentist\rq{} uncertainty of the estimators (\ref{eq:estimators}).

According to Lemma 8.2 from \cite{Ghosal2017book}, the posterior contraction rate $\phi_t(k)$ in
the $k$th Fourier mode of the drift estimator $F_t(k)$, that is, $\phi_t(k)$ in
\begin{equation} \label{eq:posterior_contraction}
\mathbb{E}^\ast \left[ \mathbb{P}\left\{F_t(k):|F_t(k)-\mathcal{F}^\ast(k)| \ge M_t \phi_t(k)\,|\,\mathcal{Y}_{(0,t]}(k) \right\}\right] \xrightarrow{t \to \infty} 0,
\end{equation}
is provided by
\begin{equation}
\phi_t(k)^2 = \sigma_t^f(k) + p_t^f(k) + (m_t^f(k)-\mathcal{F}^\ast(k))^2.
\end{equation}
Equation (\ref{eq:posterior_contraction}) holds for any nonnegative function $M_t$ with $M_t\to \infty$. Here, $\sigma_t^f(k)$ denotes the posterior variance of $F_t(k)$.

The main contributions from our subsequent analysis consist of providing bounds for the three quantities $\sigma_t^f(k)$, 
$p_t^f(k)$, and $m_t^f(k)-\mathcal{F}^\ast$ in terms of the statistical models (\ref{eq:model1}) and (\ref{eq:model2}), and additionally studying the asymptotic behaviour of the $l_2$ asymptotic contraction rate $\phi_t$, defined by
\begin{equation} \label{eq:l2_posterior_contraction_rate}
\phi_t^2 := \sum_{k\ge 1} \phi_t(k)^2,
\end{equation}
in terms of the Sobolev regularity $\beta$ of the true drift function $f^\ast$ and the variance of the prior characterised by $\alpha>0$ in (\ref{eq:prior_f}). Here, the $l_2$ asymptotic contraction rate $\phi_t$ is to be understood
in the sense that
\begin{equation} 
\mathbb{E}^\ast \left[ \mathbb{P}\left\{ F_t: \sum_{k\ge 1}  \left( F_t(k)-\mathcal{F}^\ast(k)\right)^2 \ge M_t^2 \phi_t^2\,|\, \{\mathcal{Y}_{(0,t]}(k)\}_{k \geq 1} \right\}\right] \xrightarrow{t \to \infty} 0
\end{equation}
for any nonnegative function $M_t$ with $M_t \to \infty$. In physical space,
\begin{equation}
\mathbb{E}^\ast \left[ \mathbb{P}\left\{ f_t: \frac{1}{\pi}
\int_\Omega \left( f_t(x)-f^\ast(x)\right)^2 {\rm d} x \ge M_t^2 \phi_t^2\,|\,y_{(0,t]} \right\}\right] \xrightarrow{t \to \infty} 0,
\end{equation}
where $f_t$ denotes the inverse Fourier transform of $F_t$, that is,
\begin{equation}
f_t(x) = \sum_{k\ge 1} F_t(k) \sin (kx).
\end{equation}

We first discuss a stationary formulation of the drift estimation problem in Section \ref{sec:drift}, which is closely linked to available results for non-parametric Bayesian inverse problems \citep{knapik2011,GineNickl2016book,Ghosal2017book}. The combined state and drift estimation problem is analysed in Section \ref{sec:state_drift}. 

\begin{remark}
A more general class of time-scaled prior variances over the unknown set of static parameters $\mathcal{F}^*(k)$ has been considered in
\cite{knapik2011}. This more general class of prior distributions could be incorporated into our analysis as well. However, we restrict the subsequent analysis to the simpler case (\ref{eq:prior_f}). 
\end{remark}

%
%
\section{Stationary drift estimation problem} \label{sec:drift}
%
%

Before addressing the full time-dependent problem, we first consider the stationary drift estimation problem in which $\gamma =0$ and ${\rm d}\mathcal{U}_t(k) = 0$ in (\ref{eq:model1}), giving rise to the forward model
\begin{equation}
\mathcal{U}^*(k) = k^{-2} \mathcal{F}^*(k)
\end{equation}
and observations
\begin{subequations}\label{eq:forward_model}
\begin{align}
{\rm d}\mathcal{Y}_t(k) &= \mathcal{U}^\ast(k){\rm d}t + \rho^{1/2}{\rm d}\mathcal{V}_t(k)\\
&= k^{-2} \mathcal{F}^\ast(k) {\rm d}t + \rho^{1/2} {\rm d}\mathcal{V}_t(k)
\end{align}
\end{subequations}
for $t \geq 0$. The forward model (\ref{eq:forward_model}) leads to a mildly ill-posed inverse problem. We follow a Bayesian approach by placing a Gaussian mean-zero prior with variance given by (\ref{eq:prior_f}) on the Fourier coefficients of the unknown drift function. Let us summarise our assumptions for ease of reference in the following:

\begin{assumption} \label{assumption1}
We assume that the Fourier coefficients of the true drift function satisfy (\ref{eq:drift_sobolev}).
The prior coefficients, denoted by $F_0(k)$, are independent Gaussian random variables with mean zero, that is, $\overline{F}_0(k) =0$, and with variance $\sigma_0^f(k)$ satisfying (\ref{eq:prior_f}).
\end{assumption}

Let us denote the Bayesian posterior estimate of the drift at time $t>0$ by $F_t(k)$, $k\ge 1$. We recall that $F_t(k)$ is Gaussian for all $t> 0$ provided $F_0(k)$ is Gaussian. Additionally, recall that we denote the mean of $F_t(k)$ by
$\overline{F}_t(k)$ and the variance by $\sigma_t^f(k)$. The Kalman--Bucy evolution equations (\ref{eq:MFKB}) reduce to the following equations for the mean and the variance of the $k$th Fourier mode:
\begin{subequations} \label{eq:KB_f}
\begin{align} \label{eq:KB_f_mean}
{\rm d} \overline{F}_t(k) &= - K_t^f(k) \left( k^{-2} \overline{F}_t(k){\rm d}t - {\rm d}\mathcal{Y}_t(k)\right),\\
\frac{\rm d}{{\rm d}t} \sigma_t^f(k) &= -K_t^f(k) \frac{\sigma_t^f(k)}{k^2}, \label{eq:KB_f_variance}
\end{align}
\end{subequations}
where $K_t^f(k)$ denotes the Kalman gain factor
\begin{equation}
K_t^f(k) := \frac{\sigma_t^f(k)}{\rho k^2}.
\end{equation}
The initial conditions are provided by Assumption \ref{assumption1}.

\begin{remark}
Alternatively, the Kalman--Bucy equations (\ref{eq:KB_f}) for the mean and the variance can be reformulated as mean-field equations in $F_t(k)$ directly, that is,
\begin{equation}
{\rm d}F_t(k) = - K_t^f(k) \left( \frac{1}{2k^2}\left( F_t(k) + \overline{F}_t(k)\right){\rm d}t - {\rm d}\mathcal{Y}_t(k) \right).
\end{equation}
These equations are a special case of (\ref{eq:MFKB}), and provide the starting point for numerical implementation in the form of the ensemble Kalman--Bucy filter \citep{sr:br11}, as well as extensions to nonlinear and non-Gaussian Bayesian inference problems \citep{sr:meyn13,sr:TdWMR17}.
\end{remark}

The evolution equation (\ref{eq:KB_f_variance}) for the posterior variance has the closed form solution
\begin{equation}\label{eq:sigma_f_drift}
\sigma_t^f(k) =
\frac{\sigma_0^f(k)}{\rho^{-1} k^{-4} \sigma_0^f(k) t +1}.
\end{equation}
The trace of the covariance of the full joint posterior process thus satisfies the asymptotic estimate
\begin{equation} \label{eq:lemma1_1}
\sum_{k\ge 1} \sigma_t^f(k) \asymp t^{-2\alpha/(2\alpha+5)}
\end{equation}
with the initial variances satisfying (\ref{eq:prior_f}). This result follows from asymptotic bounds for infinite sums. See
Lemma K.7 in \cite{Ghosal2017book} in particular.

Next, we carry out a \lq frequentist\rq{} analysis of the mean $\overline{F}_t(k)$ in terms of its bias with respect to the true $\mathcal{F}^\ast(k)$, and its variance $p_t^f(k)$ with respect to the measurement noise $\mathcal{V}_t(k)$. We rewrite (\ref{eq:KB_f_mean}) as
\begin{equation}
{\rm d} \overline{F}_t(k) = - K_t^f(k) \left( k^{-2}(\overline{F}_t(k)- \mathcal{F}^\ast(k)){\rm d}t
- \rho^{1/2} \mathrm{d} \mathcal{V}_t(k) \right).
\end{equation}
Denoting the expectation value of $\overline{F}_t(k)$ with respect to the measurement noise $\mathcal{V}_t(k)$ by $m_t^f(k)$, we obtain
\begin{equation}
\frac{\rm d}{{\rm d}t} m_t^f(k) = -k^{-2} K_t^f (k)\left( m_t^f(k) - \mathcal{F}^\ast (k)\right)
\label{eq:freqmean_ode}
\end{equation}
with initial condition $m_0^f(k) = 0$. Equation \eqref{eq:freqmean_ode} has the closed-form solution 
\begin{equation} \label{eq:mean_f_drift}
m_t^f(k) = \mathcal{F}^\ast(k) - \frac{\sigma_t^f(k)}{\sigma_0^f(k)}\mathcal{F}^\ast(k).
\end{equation}
Hence, based on (\ref{eq:drift_regularity}), the $l_2$-norm of the frequentist bias satisfies the asymptotic estimate
\begin{equation} \label{eq:lemma1_2}
\sum_{k\ge 1} \left(m_t^f(k)-\mathcal{F}^\ast(k)\right)^2 \asymp t^{-2\beta /(2\alpha +5)}.
\end{equation}
This result follows again from asymptotic bounds for infinite sums. 

We finally investigate the time evolution of the frequentist variance $p_t^f(k)$.  Starting from the stochastic differential equation
\begin{equation}
{\rm d} \left( \overline{F}_t(k)-m_t^f(k)\right) =
- K_t^f(k) \left( k^{-2} \left( \overline{F}_t(k)-m_t^f(k)\right){\rm d}t - \rho^{1/2} {\rm d}
\mathcal{V}_t(k) \right),
\end{equation}
using It\^{o}'s lemma yields
\begin{subequations} \label{eq:frequentist_variance}
\begin{align}
\frac{\rm d}{{\rm d}t} p_t^f(k) &= -\frac{2}{k^{2}} K_t^f(k) p_t^f(k) + \rho \left(K_t^f(k)\right)^2\\
&= -2\frac{\sigma_t^f(k)}{\rho k^4} p^f_t(k) + \frac{\left( \sigma_t^f(k)\right)^2}{\rho k^4}.
\end{align}
\end{subequations}
The initial variance is $p_0^f(k) = 0$ for all $k\ge 1$. In order to analyse the solution behaviour of (\ref{eq:frequentist_variance}), we introduce $\Delta p_t^f(k) := \sigma_t^f(k)-p_t^f(k)$ and find the associated evolution equation
\begin{equation}\label{eq:frequentist_variance_2}
\frac{\rm d}{{\rm d}t} \Delta p_t^f(k) = -2 \frac{\sigma_t^f(k)}{\rho k^4} \Delta p_t^t(k)
\end{equation} 
from which we conclude that $p_t^f(k) < \sigma_t^f(k)$, and
\begin{equation}
\lim_{t\to \infty} p_t^f(k) = \sigma_t^f(k).
\end{equation}
In fact, the explicit solution of (\ref{eq:frequentist_variance}) is 
\begin{equation}
p_t^f(k) =  \frac{\rho k^4 \sigma_0^f(k)^2 t}{\left( \rho k^4 +\sigma_0^f(k) t\right)^2}.
\end{equation}

The \lq frequentist\rq{} uncertainty is characterised by
\begin{equation} 
\overline{F}_t(k)-\mathcal{F}^\ast (k) = 
\mathcal{O}_{\rm P}(\Delta_t(k)), \qquad \Delta_t(k)\sim {\rm N}(m_t^f(k)-\mathcal{F}^\ast (k),
p_t^f(k)),
\end{equation}
so the probability of the event
\begin{equation}
|\overline{F}_t(k)-\mathcal{F}^\ast(k)| > M_t|\Delta_t(k)|
\end{equation}
has probability tending to zero for any nonnegative function $M_t$ with $M_t \to\infty$.  Here ${\rm N}(m,\sigma)$ denotes the Gaussian distribution with mean $m$ and variance $\sigma$.

\begin{theorem} \label{theorem1}
Under Assumption \ref{assumption1}, the posterior contraction rate (\ref{eq:l2_posterior_contraction_rate}) is given by
\begin{equation} \label{eq:contraction_rate_pure_drift}
\phi_t^2 = \sum_{k\ge 1} \left(
\sigma_t^f(k) + p_t^f(k) + (m_t^f(k)-\mathcal{F}^\ast (k))^2 \right)\asymp t^{-2\min\{\alpha,\beta\}/(2\alpha + 5)}
\end{equation}
for $t\to \infty$.
\end{theorem}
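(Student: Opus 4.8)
The plan is to assemble the claimed rate directly from the three component estimates derived above, viewing $\phi_t^2$ as a sum of three non-negative pieces and identifying which one controls the overall decay. The essential observation is that the two exponents appearing in (\ref{eq:lemma1_1}) and (\ref{eq:lemma1_2}), namely $2\alpha/(2\alpha+5)$ and $2\beta/(2\alpha+5)$, share the common denominator $2\alpha+5$; hence the slower-decaying contribution is the one whose numerator is smaller, which is precisely $2\min\{\alpha,\beta\}/(2\alpha+5)$ and accounts for the appearance of $\min\{\alpha,\beta\}$ in the exponent.

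First I would recall the two sharp estimates already in hand: $\sum_{k\ge 1}\sigma_t^f(k)\asymp t^{-2\alpha/(2\alpha+5)}$ from (\ref{eq:lemma1_1}) and $\sum_{k\ge 1}(m_t^f(k)-\mathcal{F}^\ast(k))^2\asymp t^{-2\beta/(2\alpha+5)}$ from (\ref{eq:lemma1_2}), both of which rely on the infinite-sum asymptotics of Lemma K.7 in \cite{Ghosal2017book}. The remaining term $\sum_{k\ge 1}p_t^f(k)$ I would dispatch through the comparison $p_t^f(k)<\sigma_t^f(k)$, which follows from $\Delta p_t^f(k)=\sigma_t^f(k)-p_t^f(k)>0$ for every $k$ by (\ref{eq:frequentist_variance_2}) together with the zero initial condition; this yields $\sum_{k\ge 1}p_t^f(k)\le\sum_{k\ge 1}\sigma_t^f(k)\asymp t^{-2\alpha/(2\alpha+5)}$, so that no separate lower bound on this term is required.

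For the upper bound on $\phi_t^2$ I would note that each of the three sums is $\lesssim t^{-2\min\{\alpha,\beta\}/(2\alpha+5)}$, since $\alpha,\beta\ge\min\{\alpha,\beta\}$ makes each exponent at least as negative as $-2\min\{\alpha,\beta\}/(2\alpha+5)$; adding the three bounds gives the upper estimate. For the matching lower bound I would invoke non-negativity of the three terms: whichever of $\alpha$ or $\beta$ attains the minimum, the corresponding sum ($\sum_{k\ge 1}\sigma_t^f(k)$ when $\alpha\le\beta$, or $\sum_{k\ge 1}(m_t^f(k)-\mathcal{F}^\ast(k))^2$ when $\beta\le\alpha$) already decays exactly at the claimed rate, so $\phi_t^2$ is bounded below by it. Combining the two bounds gives $\phi_t^2\asymp t^{-2\min\{\alpha,\beta\}/(2\alpha+5)}$. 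There is no serious analytic obstacle left at this stage---the proof is essentially a bookkeeping step---so the substantive content resides entirely in the two $\asymp$ estimates (\ref{eq:lemma1_1}) and (\ref{eq:lemma1_2}), which I take as given; the only point deserving care is ensuring that the termwise comparison $p_t^f(k)<\sigma_t^f(k)$ is valid under the infinite sum, which the strict positivity of $\Delta p_t^f(k)$ for all $k$ guarantees.
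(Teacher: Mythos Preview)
Your proposal is correct and follows essentially the same approach as the paper: invoke the estimates (\ref{eq:lemma1_1}) and (\ref{eq:lemma1_2}) together with the termwise bound $p_t^f(k)<\sigma_t^f(k)$, then combine. You are in fact more explicit than the paper about why the $\min\{\alpha,\beta\}$ arises from matching upper and lower bounds, which the paper leaves implicit.
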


\begin{proof} According to Lemma 8.2 from \citep{Ghosal2017book}, the posterior contraction rate $\phi_t(k)$ in each Fourier mode is provided by 
\begin{equation}
\phi_t(k)^2 = \sigma_t^f(k) + p_t^f(k) + (m_t^f(k)-\mathcal{F}^\ast (k))^2.
\end{equation}
Because of (\ref{eq:lemma1_1}) and (\ref{eq:lemma1_2}) together with $p_t^f(k) < \sigma_t^f(k)$, the result (\ref{eq:contraction_rate_pure_drift}) follows. 
\end{proof}

\begin{remark} The following white noise forward model has been investigated in
\citep{knapik2011}:
\begin{equation}
\mathcal{Y}_n(k) = k^{-p} F^\ast (k) + \frac{1}{n^{1/2}} \Xi_n(k), \quad n \in \mathbb{Z}^+,
\end{equation}
where $\Xi_n(k)$ are i.i.d.~Gaussian random variables with mean zero and variance one. 
The associated inference problem corresponds to a sequence of measurements with measurement error decreasing as $1/n$.  In our continuous-time problem, we obtain the same asymptotic rates as in the case considered above with $p=2$ under the formal equivalence $t = n \to \infty$.
\end{remark}

%
%
\section{Time-dependent state and drift estimation} \label{sec:state_drift}
%
%

We now return to the full dynamic model (\ref{eq:model1}) subject to observations (\ref{eq:model2}). We primarily wish to estimate the drift function $\mathcal{F}^\ast$. However, because of the stochastic model errors in (\ref{eq:model1}), we also need to estimate the states $\mathcal{U}_t$. We start with a careful analysis of the single mode system for both small- and large-$k$ Fourier modes.

%
%
\subsection{Analysis of the single-mode filtering problem} \label{sec:single_mode}
%
%

In this section, we conduct a careful analysis of the single-mode filtering and parameter estimation problem.
We suppress the dependence on the mode number $k$, and introduce the parameter $\epsilon = k^{-2}$. 
The signal process is therefore given by
\begin{equation} \label{eq:signal_process}
{\rm d}\mathcal{U}_t = - \epsilon^{-1} \mathcal{U}_t {\rm d}t + \mathcal{F}^\ast{\rm d}t + \gamma^{1/2} {\rm d} \mathcal{W}_t
\end{equation}
with given initial condition $\mathcal{U}_0 = 0$ almost surely.  Observations of the process are given by
\begin{equation} \label{eq:obs_process_2}
{\rm d}\mathcal{Y}_t = \mathcal{U}_t {\rm d}t + \rho^{1/2} {\rm d}\mathcal{V}_t 
\end{equation}
with $\mathcal{Y}_0 = 0$ almost surely. The complete observation path up to time $t$ is denoted by $\mathcal{Y}_{(0,t]}$.

Recall that the mean-field Kalman--Bucy equations are given by (\ref{eq:MFKB}) and that their solutions $(U_t,F_t)$ are Gaussian 
distributed. We again drop the dependence on the Fourier mode number $k$ in the subsequent analysis. Taking conditional
expectations in the mean-field equations (\ref{eq:MFKB}) gives rise to the following evolution equations in the conditional means
$(\overline{U}_t,\overline{F}_t)$:
\begin{subequations} \label{eq:MFKB_mean}
\begin{align}
{\rm d}\overline{U}_t &= -\epsilon^{-1} \overline{U}_t{\rm d}t + \overline{F}_t{\rm d}t -
K_t^u (\overline{U}_t {\rm d}t - {\rm d}\mathcal{Y}_t), \\
{\rm d}\overline{F}_t &= -K_t^f (\overline{U}_t {\rm d}t - {\rm d}\mathcal{Y}_t).
\end{align}
\end{subequations}
The deviations $\Delta U_t:=U_t-\overline{U}_t$ and $\Delta F_t:=F_t - \overline{F}_t$ thus satisfy
\begin{subequations} \label{eq:MFKB_deviations}
\begin{align}
{\rm d}\Delta U_t &= - \epsilon^{-1}\Delta U_t{\rm d}t + \Delta F_t{\rm d}t + \gamma^{1/2}
{\rm d}W_t - \frac{1}{2} K_t^u \Delta U_t {\rm d}t,\\
{\rm d} \Delta F_t &= - \frac{1}{2} K_t^f \Delta U_t {\rm d}t.
\end{align}
\end{subequations}

We can further decompose (\ref{eq:MFKB_mean}) by introducing the \lq frequentist\rq\, expectation values 
$m_t^u = \mathbb{E}^\ast[\overline{U}_t]$ and  $m_t^f = \mathbb{E}^\ast[\overline{F}_t]$ 
with respect to the observation process (\ref{eq:obs_process_2}), 
and the deviations $\Delta \overline{U}_t := \overline{U}_t - m_t^u$ and 
$\Delta \overline{F}_t := \overline{F}_t - m_t^f$. Taking this expectation in (\ref{eq:model2}) yields
\begin{equation}
\mathbb{E}^\ast[{\rm d}\mathcal{Y}_t] = \mathbb{E}^\ast[\mathcal{U}_t] {\rm d}t.
\end{equation}
Introducing the shorthand $\mu_t = \mathbb{E}^\ast[\mathcal{U}_t]$, one is left with 
the ordinary differential equations
\begin{subequations} \label{eq:MFKB_mean_mean}
\begin{align} \label{eq:MFKB_mean_mean_a}
\frac{\rm d}{{\rm d}t} m_t^u &= -\epsilon^{-1} m_t^u + m_t^f - K_t^u (m_t^u - \mu_t),\\
\frac{\rm d}{{\rm d}t} m_t^f &= - K_t^f (m_t^u - \mu_t), \label{eq:MFKB_mean_mean_b}
\end{align}
\end{subequations}
for the mean values $(m_t^u,m_t^f)$.  Furthermore, defining $\Delta \mathcal{U}_t := \mathcal{U}_t - \mu_t$,
\begin{subequations} \label{eq:MFKB_mean_deviations_temp}
\begin{align} \label{eq:MFKB_mean_deviations_temp_a}
{\rm d}\Delta \overline{U}_t &= -\epsilon^{-1} \Delta \overline{U}_t{\rm d}t + \Delta \overline{F}_t{\rm d}t -
K_t^u \left((\Delta \overline{U}_t  - \Delta \mathcal{U}_t){\rm d}t - \rho^{1/2} {\rm d}\mathcal{V}_t \right), \\
{\rm d}\Delta \overline{F}_t &= -K_t^f \left((\Delta \overline{U}_t - \Delta \mathcal{U}_t){\rm d}t - \rho^{1/2} {\rm d}\mathcal{V}_t 
\right) 
\end{align}
\end{subequations}
for the deviations $(\Delta \overline{U}_t,\Delta \overline{F}_t)$. 
It follows from (\ref{eq:signal_process}) that the expectation value $\mu_t$ satisfies the evolution equation 
\begin{equation} \label{eq:signal_mean}
\frac{\rm d}{{\rm d}t} \mu_t = -\epsilon^{-1} \mu_t + \mathcal{F}^\ast,
\end{equation}
while the deviation $\Delta \mathcal{U}_t$ satisfies
\begin{equation} \label{eq:signal_deviations}
{\rm d}\Delta \mathcal{U}_t  = -\epsilon^{-1} \Delta \mathcal{U}_t{\rm d}t + \gamma^{1/2} {\rm d}\mathcal{W}_t .
\end{equation}
Both equations follow from (\ref{eq:signal_process}). We finally combine (\ref{eq:MFKB_mean_deviations_temp_a}) and
(\ref{eq:signal_deviations}) into a single equation for the new variable $\widehat{U}_t := \Delta \overline{U}_t - \Delta \mathcal{U}_t$, and replace (\ref{eq:MFKB_mean_deviations_temp}) by
\begin{subequations} \label{eq:MFKB_mean_deviations}
\begin{align}
{\rm d}\widehat{U}_t &= -\epsilon^{-1} \widehat{U}_t{\rm d}t + \Delta \overline{F}_t{\rm d}t - \gamma^{1/2}{\rm d}\mathcal{W}_t -
K_t^u \left(\widehat{U}_t  {\rm d}t - \rho^{1/2} {\rm d}\mathcal{V}_t \right), \\
{\rm d}\Delta \overline{F}_t &= -K_t^f \left(\widehat{U}_t{\rm d}t - \rho^{1/2} {\rm d}\mathcal{V}_t 
\right).
\end{align}
\end{subequations}

We have thus decomposed the mean-field Kalman--Bucy equations into the analysis of the three subsystems
(\ref{eq:MFKB_deviations}), (\ref{eq:MFKB_mean_mean}) and (\ref{eq:MFKB_mean_deviations}). Here, 
(\ref{eq:MFKB_mean_mean}) describes the systematic bias in the Bayesian mean estimator, while 
(\ref{eq:MFKB_mean_deviations}) and (\ref{eq:MFKB_deviations}) characterise the `frequentist'  and
`Bayesian' uncertainties, respectively.

We note that the equations in (\ref{eq:MFKB_deviations}) do not depend on the observation process $\mathcal{Y}_{(0,t]}$ 
that they are decoupled from (\ref{eq:MFKB_mean_mean}) and 
(\ref{eq:MFKB_mean_deviations}). In fact, since $(\Delta U_t,\Delta F_t)$ are centred Gaussian random variables, 
it is sufficient to look at the (deterministic) time evolution equations for the posterior variances
\begin{equation}
\sigma_t^{u} := \mathbb{E}[\Delta U_t^2],  \quad
\sigma_t^{f} := \mathbb{E}[\Delta F_t^2] ,
\end{equation}
and the posterior covariance
\begin{equation}
\sigma_t^{uf} := \mathbb{E}[\Delta U_t\Delta F_t] ,
\end{equation}
which are given by
\begin{subequations} \label{eq:MFKB_deviations_2}
\begin{align} \label{eq:MFKB_derivations_21}
\frac{\rm d}{{\rm d}t} \sigma_t^{u} &= -2\epsilon^{-1} \sigma_t^{u} + 2\sigma_t^{uf} +\gamma
- \rho^{-1} \left(\sigma_t^{u}\right)^2,\\ \label{eq:MFKB_derivations_22}
\frac{\rm d}{{\rm d}t} \sigma_t^{uf} &= -\epsilon^{-1} \sigma_t^{uf} + \sigma_t^{f} 
- \rho^{-1} \sigma_t^{u} \sigma_t^{uf},\\
\frac{\rm d}{{\rm d}t} \sigma_t^{f} &= - \rho^{-1} \left(\sigma_t^{uf}\right)^2.
\end{align}
\end{subequations}
Since $(\widehat{U}_t,\Delta \overline{F}_t)$ are also centred Gaussian random variables, 
the time-dependent linear SDEs
(\ref{eq:MFKB_mean_deviations}) can again be analyzed in terms of the variances
\begin{equation}
p_t^{u} := \mathbb{E}^\ast [\widehat{U}_t^2], \quad
p_t^{f} := \mathbb{E}^\ast [\Delta \overline{F}_t^2 ],
\end{equation}
and the covariance
\begin{equation}
p_t^{uf} := \mathbb{E}^\ast [\widehat{U}_t\Delta \overline{F}_t ].
\end{equation}
These quantities satisfy the linear time-dependent ordinary differential equations
\begin{subequations} \label{eq:MFKB_mean_deviations_2}
\begin{align}
\frac{\rm d}{{\rm d}t} p_t^u &= -2\epsilon^{-1} p_t^u + 2 p_t^{uf} + \gamma - 2K_t^u p_t^u + \rho (K_t^u)^2,\\
\frac{\rm d}{{\rm d}t} p_t^{uf} &= -\epsilon^{-1} p_t^{uf} + p_t^f - K_t^u p_t^{uf} - K_t^f p_t^u + \rho K_t^u K_t^f,\\
\frac{\rm d}{{\rm d}t} p_t^f &= -2K_t^f p_t^{uf} + \rho (K_t^f)^2.
\end{align}
\end{subequations}
Equations \eqref{eq:MFKB_deviations_2} and \eqref{eq:MFKB_mean_deviations_2} are combined to 
to yield
\begin{subequations} \label{eq:MFKB_mix}
\begin{align} 
\frac{\rm d}{{\rm d}t} \Delta p_t^u &= -2(\epsilon^{-1} + K_t^u)\Delta p_t^u + 2\Delta p_t^{uf} ,\\
\frac{\rm d}{{\rm d}t} \Delta p_t^{uf} &= -(\epsilon^{-1}+K_t^u)\Delta p_t^{uf}+ \Delta p_t^f
 - K_t^f \Delta p_t^u ,\\ \
\frac{\rm d}{{\rm d}t} \Delta p_t^f &= -2K_t^f \Delta p_t^{uf}
\end{align}
\end{subequations}
in the variables
\begin{equation}
\Delta p_t^{uf} := p_t^{uf} - \sigma_t^{uf},\quad
\Delta p_t^{u} := p_t^{u} - \sigma_t^{u},\quad \Delta p_t^{f} := p_t^{f} - \sigma_t^{f}.
\end{equation}
The initial conditions are $p_0^u = p_0^f=p_0^{uf} = \sigma_0^u = \sigma_0^{uf}= 0$, and $\sigma^f_0 = \epsilon^{\alpha + 1/2}$.

\begin{lemma} \label{lemma:asymptotic}
It holds that
\begin{equation}
p_t^u \to \sigma_t^u, \quad p_t^f \to \sigma_t^f, \quad p_t^{uf} \to \sigma_t^{uf}
\end{equation}
as $t\to \infty$, that is, $\Delta p_t^u = \Delta p_t^{uf}=\Delta p_t^f =0$ is a stable equilibrium point
of (\ref{eq:MFKB_mix}). 
\end{lemma}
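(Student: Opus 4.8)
The plan is to recognise that (\ref{eq:MFKB_mix}) is a \emph{homogeneous} matrix Lyapunov differential equation and to reduce the lemma to the asymptotic stability of a single two-dimensional linear flow. The structural starting point is the observation that $\sigma_t=(\sigma_t^u,\sigma_t^{uf},\sigma_t^f)$ is itself a particular solution of the $p$-equations (\ref{eq:MFKB_mean_deviations_2}): substituting $p=\sigma$ and using $K_t^u=\sigma_t^u/\rho$ and $K_t^f=\sigma_t^{uf}/\rho$, the quadratic gain terms collapse exactly onto the quadratic terms of the Riccati system (\ref{eq:MFKB_deviations_2}), so each $p$-equation reproduces the corresponding $\sigma$-equation. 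This is precisely why the difference $\Delta p_t$ obeys the homogeneous system (\ref{eq:MFKB_mix}). Collecting the three components into the symmetric matrix $\Delta P_t=\begin{pmatrix}\Delta p_t^u & \Delta p_t^{uf}\\ \Delta p_t^{uf} & \Delta p_t^f\end{pmatrix}$, a direct check turns (\ref{eq:MFKB_mix}) into
\[
\frac{\rm d}{{\rm d}t}\Delta P_t = M_t\,\Delta P_t + \Delta P_t\,M_t^\top, \qquad M_t := A - K_t C,
\]
with $A=\begin{pmatrix}-\epsilon^{-1} & 1\\ 0 & 0\end{pmatrix}$, $C=(1\ 0)$, and gain vector $K_t=(K_t^u,K_t^f)^\top$; that is, $M_t$ is the closed-loop Kalman--Bucy filter error matrix.

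Writing $\Phi_t$ for the transition matrix of $\dot x = M_t x$ (so $\dot\Phi_t=M_t\Phi_t$, $\Phi_0=I$), the Lyapunov equation is solved by congruence, $\Delta P_t=\Phi_t\,\Delta P_0\,\Phi_t^\top$. The prescribed initial data give $\Delta P_0=-\epsilon^{\alpha+1/2}\,e_2e_2^\top\preceq 0$, a rank-one negative semidefinite matrix. This already yields two facts for free: $\Delta P_t\preceq 0$ for all $t$ (hence $p_t^u\le\sigma_t^u$ and $p_t^f\le\sigma_t^f$, matching the stationary computation of Section \ref{sec:drift}), and $\Delta P_t=-\epsilon^{\alpha+1/2}\,x_tx_t^\top$ with $x_t:=\Phi_t e_2$. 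Consequently all three entries are quadratic in $x_t$, and the entire lemma reduces to the single statement $x_t\to 0$ as $t\to\infty$, i.e.\ to asymptotic stability of the filter-error flow along the trajectory issuing from $e_2$.

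To establish $x_t\to 0$ I would use the standard inverse-covariance Lyapunov function of Kalman--Bucy filtering, $V_t:=x_t^\top\Sigma_t^{-1}x_t$, where $\Sigma_t$ is the posterior covariance assembled from $(\sigma_t^u,\sigma_t^{uf},\sigma_t^f)$ and $Q:=\mathrm{diag}(\gamma,0)$ is the process-noise matrix. Differentiating and inserting the Riccati equation (\ref{eq:MFKB_deviations_2}) for $\frac{\rm d}{{\rm d}t}\Sigma_t^{-1}$ makes the $A$- and gain-terms cancel, leaving $\frac{\rm d}{{\rm d}t}V_t = -\rho^{-1}(Cx_t)^2 - x_t^\top\Sigma_t^{-1}Q\Sigma_t^{-1}x_t\le 0$. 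Thus $V_t$ is non-increasing and the observed component $Cx_t=(x_t)_1$ lies in $L^2(0,\infty)$; together with observability of $(A,C)$ (the matrix $\begin{pmatrix}C\\ CA\end{pmatrix}$ has full rank) this drains the state component. The parameter component $(x_t)_2$ is then controlled by exploiting that the drift is asymptotically identified: the third line of (\ref{eq:MFKB_deviations_2}) forces $\sigma_t^f$ to decrease monotonically to its stationary value $0$ and $\sigma_t^{uf}\to 0$, so $\Sigma_t^{-1}$ blows up precisely in the parameter direction, and boundedness of $V_t$ pins $(x_t)_2\to 0$.

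The hard part will be exactly this marginal parameter direction. Since $A$ has a zero eigenvalue and the process noise enters only the state (the static drift is not excited), the pair $(A,Q^{1/2})$ is not controllable, so the classical theorems guaranteeing exponential filter stability from uniform complete observability \emph{and} controllability do not apply verbatim; moreover $\Sigma_t$ is singular at $t=0$ and degenerates again as $t\to\infty$, so $V_t$ is only well defined on an open time interval and loses coercivity in the limit. Making the third step rigorous therefore requires a quantitative handle on the decay of the cross-gain $K_t^f=\sigma_t^{uf}/\rho$ --- essentially a persistency-of-excitation estimate such as $\int_0^\infty K_t^f\,{\rm d}t=\infty$ --- guaranteeing that the feedback through $C$ continues to drain the parameter error rather than freezing it. I expect this estimate, obtained from the detailed single-mode Riccati asymptotics developed in Section \ref{sec:single_mode}, to be the technical heart of the argument, whereas the reduction to $x_t\to 0$ via the Lyapunov-equation congruence is essentially algebraic.
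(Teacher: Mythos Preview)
Your reduction is correct and genuinely different from the paper's argument. The paper works directly on the three scalar differences and introduces the explicit time-dependent Lyapunov function
\[
V_t=\tfrac{1}{2}(\Delta p_t^f)^2+K_t^f(\Delta p_t^{uf})^2+\tfrac{(K_t^f)^2}{2}(\Delta p_t^u)^2,
\]
checks that both $\partial_t V_t\le 0$ (since $\dot K_t^f\le 0$) and the advective part of $\dot V_t$ is $\le 0$, and then closes the argument by inserting the single-mode asymptotics $K_t^f\sim c/t$ to obtain $\frac{\rm d}{{\rm d}t}(\Delta p_t^f)^2\approx -\tfrac{C}{t}(\Delta p_t^f)^2$. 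Your route instead packages the differences into the symmetric matrix $\Delta P_t$, recognises the Lyapunov-equation structure $\dot{\Delta P}_t=M_t\Delta P_t+\Delta P_tM_t^\top$, uses the congruence solution and the rank-one initial data to collapse everything to the decay of a single filter-error trajectory $x_t=\Phi_te_2$, and then invokes the classical inverse-covariance Lyapunov function $x_t^\top\Sigma_t^{-1}x_t$. This is more structural: the inequality $p_t^f\le\sigma_t^f$ drops out from $\Delta P_t\preceq 0$, and the problem is reframed as a standard (degenerate) Kalman--Bucy stability question. The paper's choice, by contrast, is self-contained and sidesteps the singularity of $\Sigma_t$ at $t=0$ and as $t\to\infty$ that you flag. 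Both arguments ultimately rest on the same quantitative input---the borderline, non-integrable decay $K_t^f\sim c/t$ (equivalently your $\int_0^\infty K_t^f\,{\rm d}t=\infty$)---to push the parameter component to zero; you have correctly identified this as the crux, and it is exactly where the paper spends its effort as well.
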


\begin{proof}
The lemma follows from the decay property of
\begin{equation}
V_t(\Delta p_t^u,\Delta p_t^{uf},\Delta p_t^f) 
:= \frac{1}{2} (\Delta p_t^f)^2 + K_t^f (\Delta p_t^{uf})^2 + \frac{(K_t^f)^2}{2} (\Delta p_t^u)^2
\end{equation}
and the fact that $K_t^f \sim c/t$ for $t$ sufficiently large with an appropriate constant $c>0$.
In particular,
\begin{equation}
\partial_t V_t = \left\{ (\Delta p_t^{uf})^2 + K_t^f (\Delta p_t^u)^2\right\} \frac{\rm d}{{\rm d}t} K_t^f  \le 0,
\end{equation}
and for the gradient $\nabla V_t(\Delta p_t^u,\Delta p_t^{uf},\Delta p_t^f) \in \mathbb{R}^3$:
\begin{equation} \label{eq:lemma_proof1}
\nabla V_t \cdot \frac{\rm d}{{\rm d}t} \left(\begin{array}{c}
\Delta p_t^u \\ \Delta p_t^{uf} \\ \Delta p_t^f \end{array}\right) =
-2(\epsilon^{-1}+K_t^u) \left\{ K_t^f  (\Delta p_t^{uf})^2 + (\Delta p_t^u)^2\right\} K_t^f \le 0.
\end{equation}
Therefore, the total time derivative satisfies
\begin{equation}
\frac{\rm D}{{\rm D}t} V_t( \Delta p_t^u,\Delta p_t^{uf},\Delta p_t^f) \le 0.
\end{equation}
Furthermore, for large enough $t$, 
\begin{equation}
p_t^{uf} \approx \left( \epsilon^{-1} + K_t^u \right)^{-1} p_t^f, \qquad p_t^u \approx 
 \left( \epsilon^{-1} + K_t^u \right)^{-1} p_t^{uf} \approx  \left( \epsilon^{-1} + K_t^u \right)^{-2} p_t^f, 
\end{equation}
and $\partial_t V_t$ becomes small relative to (\ref{eq:lemma_proof1}) since $\frac{\rm d}{{\rm d}t} K_t^f \sim -c/t^2$. 
Hence, for $t$ again sufficiently large and with an appropriate constant $C>0$,
\begin{equation}
\frac{\rm d}{{\rm d}t}  (\Delta p_t^f)^2 \approx -\frac{C}{t}  (\Delta p_t^f)^2,
\end{equation}
and the desired convergence result follows.
\end{proof}

\noindent
Lemma \ref{lemma:asymptotic} implies that, as for the pure drift estimation problem, 
the Bayesian (filtering) variance asymptotically covers the \lq frequentist\rq{} (data) variance.

\subsubsection{The large-$k$ Fourier mode case} \label{sec:single_mode_large}

We now investigate the case $\epsilon \ll 1$ in more detail.
The equations (\ref{eq:MFKB_deviations_2}) 
possess a slow manifold for $\epsilon>0$ sufficiently small \citep{verhulst07,Shchepakina2014SingularPI}. To leading order,  solutions on the slow manifold satisfy
\begin{equation} \label{eq:slow_manifold_approx}
\sigma_t^{u} = \epsilon \frac{\gamma}{2} , \qquad \sigma_t^{uf} = \epsilon \sigma_t^{f}.
\end{equation}
In other words, the long-time variance of $U_t$ is governed by $\sigma_t^{u} = \epsilon \gamma /2$, 
and of $F_t$ by
\begin{equation} \label{eq:LT_deviations}
\frac{\rm d}{{\rm d}t} \sigma_t^{f} = - \mathcal{K}_t \sigma_t^{f}
\end{equation}
up to higher-order terms in $\epsilon$. Here we have introduced the new Kalman gain factor
\begin{equation}
\mathcal{K}_t := \frac{\epsilon^2}{\rho} \sigma_t^{f}.
\end{equation}
We also notice that the Kalman gain factor $K_t^u$ satisfies $K_t^u = \mathcal{O}(\epsilon)$. 
See the Appendix for more details on the derivation of the reduced system (\ref{eq:LT_deviations}).

The combined time-dependent linear equations (\ref{eq:signal_mean}) and (\ref{eq:MFKB_mean_mean}) 
also give rise to a slow manifold, with the slow dynamics governed by $\mu_t = \epsilon \mathcal{F}^\ast$, $m^u_t = \epsilon m^f_t$, and
\begin{equation} \label{eq:LT_mean_mean}
\frac{\rm d}{{\rm d}t} m^f_t = - \mathcal{K}_t (m^u_t - \mathcal{F}^\ast).
\end{equation}
The initial condition is given by $m^f_0 = 0$.

The solution of (\ref{eq:LT_deviations}) is explicitly given by
\begin{equation} \label{eq:estimate_variance}
\sigma_t^{f} = \frac{\rho \sigma_0^{f}}{\epsilon^2 \sigma_0^{f} t + \rho}.
\end{equation}
We also find that the solution of (\ref{eq:LT_mean_mean}) satisfies
\begin{equation} \label{eq:estimate_bias}
m^f_t = \left( 1 - \frac{\sigma_t^{f}}{\sigma_0^{f}}\right) \mathcal{F}^\ast.
\end{equation}
The large-time limit yields that $m^f_t - \mathcal{F}^\ast$ decays with rate $t^{-1}$.
We have already shown that $p_t^{f} \to \sigma_t^{f}$ as $t\to \infty$.

\begin{remark} We note that the combined state and drift estimation problem under the slow manifold approximation behaves exactly like the stationary drift estimation problem from Section \ref{sec:drift}. In particular, compare equations (\ref{eq:sigma_f_drift}) and (\ref{eq:mean_f_drift}). The numerical experiments in Section \ref{sec:numerics} reveal that
the slow manifold approximation already holds for rather small-$k$ Fourier modes. 
\end{remark}

\noindent
In summary, we have shown that
\begin{equation} \label{eq:drift_probability}
\overline{F}_t - \mathcal{F}^\ast = \mathcal{O}_{\rm P}(\Delta_t)
\end{equation}
with 
\begin{equation}
\Delta_t \sim{\rm N}(m^f_t-\mathcal{F}^\ast,p_t^f).
\end{equation}
Hence, according to Lemma 8.2.~from \cite{Ghosal2017book}, the posterior contraction rate 
$\phi_t$ in
\begin{equation} \label{eq:contraction_rate}
\mathbb{E}^\ast \left[\mathbb{P} \left\{F_t: |F_t-\mathcal{F}^\ast| \ge M_t \phi_t \,| \,\mathcal{Y}_{(0,t]} \right\} \right] \xrightarrow{t \to \infty} 0
\end{equation}
is provided by
\begin{equation}
\phi_t^2   =  \sigma_t^f + p_t^f + (m^f_t-\mathcal{F}^\ast)^2  \asymp t^{-1}
\end{equation}
since $p_t^f < \sigma_t^f$. 

\begin{remark} While the posterior contraction rate is $\phi_t \asymp t^{-1/2}$ for any fixed $k \gg 1$, the analysis of the infinite-dimensional state and parameter estimation problem is more complicated since one takes the limit $k\to \infty$ first, and then the limit $t \to \infty$. 
\end{remark}

\subsubsection{The small-$k$ Fourier mode case} \label{sec:single_mode_small}

The above analysis holds for $\epsilon$ sufficiently small, that is, for sufficiently large Fourier modes. 
We now investigate the behaviour of the filtering 
and estimation problem for $\epsilon = \mathcal{O}(1)$. We set $\epsilon = 1$, that is, $k=1$ for
simplicity. We start again with (\ref{eq:MFKB_deviations_2}) and note that there is a finite time $t_\ast$ such that 
$\sigma_t^{uf}$ has become sufficiently small relative to $\gamma$, and we conclude from 
(\ref{eq:MFKB_derivations_21}) that
\begin{equation} \label{eq:approx_sigma_u}
\sigma_t^u \approx C_0 \qquad \mbox{for all} \qquad t \ge t^\ast ,
\end{equation}
where $C_0>0$ satisfies the quadratic equation
\begin{equation}
2C_0 - \gamma + \rho^{-1} C_0^2=0.
\end{equation} 

\begin{remark} We note that $(\sigma_t^{uf})^2 \le \sigma_t^u \sigma_t^f$. We also
know that $\sigma_t^u$ remains bounded since the system is fully observed, and that $\sigma_t^f$ decays. Therefore,
the assumption that $\sigma_t^{uf}$ becomes small relative to $\gamma$ is justified for $t\ge t_\ast$.
\end{remark} 

Upon substituting (\ref{eq:approx_sigma_u}) into (\ref{eq:MFKB_derivations_22}),  we can conclude an exponential decay of $\sigma_t^{uf}$ towards $\sigma_t^f$ and, hence, we can choose $t_\ast$ large enough
such that 
\begin{equation}
\sigma_t^{uf} \approx \left(1 + \frac{C_0}{\rho}\right)^{-1} \sigma_t^f \qquad \mbox{for all}\qquad t\ge t_\ast .
\end{equation}
In other words, the asymptotic dynamics in $\sigma_t^f$ is again governed by (\ref{eq:LT_deviations}) with 
Kalman gain factor 
\begin{equation} \label{eq:Kf_approx}
\mathcal{K}_t := C_1^2 \frac{\sigma_t^f}{\rho} 
\end{equation}
and constant 
\begin{equation}
C_1 := \left(1 + \frac{C_0}{\rho} \right)^{-1}.
\end{equation}
Therefore, $\sigma_t^f$ behaves asymptotically like $t^{-1}$.

We next analyse the long-time dynamics of (\ref{eq:MFKB_mean_mean}). We use $K_t^u \approx C_0/\rho$
and find that (\ref{eq:MFKB_mean_mean_a}) gives rise to the quasi-equilibrium 
\begin{equation}
0 \approx -m^u_t + m^f_t - \frac{C_0}{\rho}(m^u_t - \mathcal{F}^\ast) =
-C_1^{-1} (m^u_t-\mathcal{F}^\ast) + m^f_t - \mathcal{F}^\ast
\end{equation}
for $t\ge t_\ast$ and $t_\ast$ sufficiently large. Upon substituting this relation into (\ref{eq:MFKB_mean_mean_b}) and using 
\begin{equation} \label{eq:Kf_approx_2}
K_t^f \approx C_1 \frac{\sigma_t^f}{\rho} = C_1^{-1}\mathcal{K}_t,
\end{equation}
(\ref{eq:Kf_approx}) as well as $\mu_t \approx \mathcal{F}^\ast$, we arrive at (\ref{eq:LT_mean_mean}) with
gain factor $\mathcal{K}_t$ given by (\ref{eq:Kf_approx}).
Repeating the analysis from Section \ref{sec:single_mode_large}, 
one finds again that $\overline{F}_t-\mathcal{F}^\ast$ satisfies (\ref{eq:drift_probability}) with $|m_t^f-\mathcal{F}^\ast|
\asymp t^{-1}$ and $p_t^u < \sigma_t^u$.

Summarising our findings, we conclude that the contraction rate $\phi_t$ in (\ref{eq:contraction_rate}) is of order $t^{-1/2}$ for $t$ sufficiently large.

%
\subsection{Asymptotic rates for the stochastic heat equation}
%

We recall our Assumption \ref{assumption1} on the asymptotic behaviour of the true drift function $\mathcal{F}^\ast(k)$ and on the prior random variables $F_0(k)$, $k\ge 1$. These assumptions together with the results from Sections \ref{sec:single_mode_large} and \ref{sec:single_mode_small}, and in particular (\ref{eq:estimate_variance}) and (\ref{eq:estimate_bias}) with $\epsilon = k^{-2}$, imply that
\begin{equation} \label{eq:theorem2_1}
\sigma_t^f(k) \sim \frac{\rho k^{-(2\alpha+1)}}{k^{-(2\alpha+5)} t + \rho}
\end{equation}
and
\begin{equation} \label{eq:theorem2_2}
|m^f_t(k) -\mathcal{F}^\ast(k)| \sim \frac{\rho k^{-(\beta+1/2)}}{k^{-(2\alpha+5)} t + \rho},
\end{equation}
for $k\ge k_\ast$, and
\begin{equation}
\sigma_t^f(k) \asymp t^{-1}, \qquad 
m^f_t(k) -\mathcal{F}^\ast(k) \asymp t^{-1}
\end{equation}
for all $k\le k_\ast$, and $t\ge t_\ast$ sufficiently large. Here $k_\ast$ and $t_\ast$ are chosen sufficiently large such that the analysis of Sections \ref{sec:single_mode_large} and \ref{sec:single_mode_small}, respectively, applies. 

\begin{theorem} \label{theorem2}
Under Assumption \ref{assumption1} and the regularity of the true drift function $\mathcal{F}^\ast(k)$ and the prior $F_0(k)$, $k\ge 1$, the asymptotic contraction rate (\ref{eq:l2_posterior_contraction_rate}) in the $l_2$-norm becomes
\begin{equation}
  \phi(t)^2 = \sum_{k\ge 1} \left( (m^f_t(k)-\mathcal{F}^\ast(k))^2 + p_t^f(k) + \sigma_t^f(k)\right) \asymp 
  t^{-2\,\mathrm{min}\{\alpha,\beta\}/(2\alpha + 5)} .
\end{equation}
\end{theorem}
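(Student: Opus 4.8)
The plan is to reduce the dynamic $l_2$-rate to the stationary computation carried out in the proof of Theorem~\ref{theorem1}, exploiting the fact that the single-mode estimates (\ref{eq:theorem2_1})--(\ref{eq:theorem2_2}) coincide, up to multiplicative constants, with the closed-form stationary expressions (\ref{eq:sigma_f_drift}) and (\ref{eq:mean_f_drift}). First I would discard the frequentist-variance term: since $0\le p_t^f(k)\le\sigma_t^f(k)$ for $t$ large, by Lemma~\ref{lemma:asymptotic} and the bound $p_t^f(k)<\sigma_t^f(k)$ from the single-mode analysis, one has $\sigma_t^f(k)+p_t^f(k)\asymp\sigma_t^f(k)$, whence
\begin{equation}
\phi(t)^2\asymp\sum_{k\ge 1}\left(\sigma_t^f(k)+(m^f_t(k)-\mathcal{F}^\ast(k))^2\right).
\end{equation}
I would then split this sum at the cutoff $k_\ast$ separating the regimes of Sections~\ref{sec:single_mode_small} and~\ref{sec:single_mode_large}.

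For the finitely many low modes $k\le k_\ast$, the analysis of Section~\ref{sec:single_mode_small} gives $\sigma_t^f(k)\asymp t^{-1}$ and $(m^f_t(k)-\mathcal{F}^\ast(k))^2\asymp t^{-2}$, so this block contributes at most $\mathcal{O}(t^{-1})$. Because $\min\{\alpha,\beta\}\le\alpha$ forces $2\min\{\alpha,\beta\}/(2\alpha+5)<1$, this contribution decays strictly faster than the claimed rate and is negligible in the final asymptotics.

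The heart of the argument is the tail $k>k_\ast$. Substituting $\sigma_0^f(k)=k^{-2\alpha-1}$ into (\ref{eq:sigma_f_drift}) reproduces (\ref{eq:theorem2_1}) exactly, and (\ref{eq:mean_f_drift}) together with the regularity (\ref{eq:drift_regularity}) reproduces (\ref{eq:theorem2_2}). The tail sums are therefore governed by the same infinite-sum asymptotics already invoked in Section~\ref{sec:drift}, namely Lemma~K.7 of~\cite{Ghosal2017book}, and give
\begin{equation}
\sum_{k>k_\ast}\sigma_t^f(k)\asymp t^{-2\alpha/(2\alpha+5)},\qquad\sum_{k>k_\ast}(m^f_t(k)-\mathcal{F}^\ast(k))^2\asymp t^{-2\beta/(2\alpha+5)},
\end{equation}
exactly as in (\ref{eq:lemma1_1}) and (\ref{eq:lemma1_2}). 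Adding the two tail rates to the subdominant low-mode block, the slowest-decaying term dominates and yields $\phi(t)^2\asymp t^{-2\min\{\alpha,\beta\}/(2\alpha+5)}$, as claimed.

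The step I expect to be the main obstacle is justifying that the tail estimates hold uniformly enough in $k$ to be summed. The expressions (\ref{eq:theorem2_1})--(\ref{eq:theorem2_2}) are only the leading-order output of the slow-manifold reduction of Section~\ref{sec:single_mode_large}, with remainders that are higher order in $\epsilon=k^{-2}$; to sum over $k>k_\ast$ one must verify that the implied constants in the relation ``$\sim$'' can be taken uniform over $k>k_\ast$ and $t\ge t_\ast$, so that the neglected corrections do not accumulate. This is precisely the delicate point noted earlier, that the infinite-dimensional problem requires sending $k\to\infty$ before $t\to\infty$: concretely, one needs to bound the slow-manifold remainder in each mode by a summable, $t$-uniform $k$-dependent quantity, which is what choosing $k_\ast$ large (hence $\epsilon$ uniformly small on the tail) is meant to secure.
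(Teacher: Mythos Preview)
Your proposal is correct and follows essentially the same route as the paper's own proof: the paper also splits into $k\le k_\ast$ and $k> k_\ast$ (this split is set up in the paragraph immediately preceding the theorem), invokes $p_t^f(k)<\sigma_t^f(k)$, and then appeals to Lemma~K.7 of \cite{Ghosal2017book} to sum the tail expressions (\ref{eq:theorem2_1})--(\ref{eq:theorem2_2}). Your write-up is in fact more explicit than the paper's, which compresses all of this into two sentences; your honest flag about uniformity of the slow-manifold remainders in $k$ is a genuine subtlety that the paper does not address in its proof either, beyond the choice of $k_\ast$ large.
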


\begin{proof} The theorem follows from (\ref{eq:theorem2_1}) and (\ref{eq:theorem2_2}) together with $p_t^f(k) <
\sigma_t^f(k)$ and standard asymptotic estimates for infinite sums. See, for example, again Lemma K.7 from
\cite{Ghosal2017book}.
\end{proof}

We note that the rate in Theorem \ref{theorem2} is the same as in (\ref{eq:contraction_rate_pure_drift}) for the stationary drift estimation problem. In other words, the need for estimating the states as well as the drift does not deteriorate the asymptotic contraction rates. In fact, the data does not affect the posterior uncertainty in the states as $k\to \infty$, which is entirely determined by the equilibrium distribution of the associated Ornstein--Uhlenbeck process. We verify this behaviour through a simple numerical experiment in the following section.

%
\section{Numerical exploration} \label{sec:numerics}
%

\begin{figure}
\begin{center}
\includegraphics[width=0.45\textwidth]{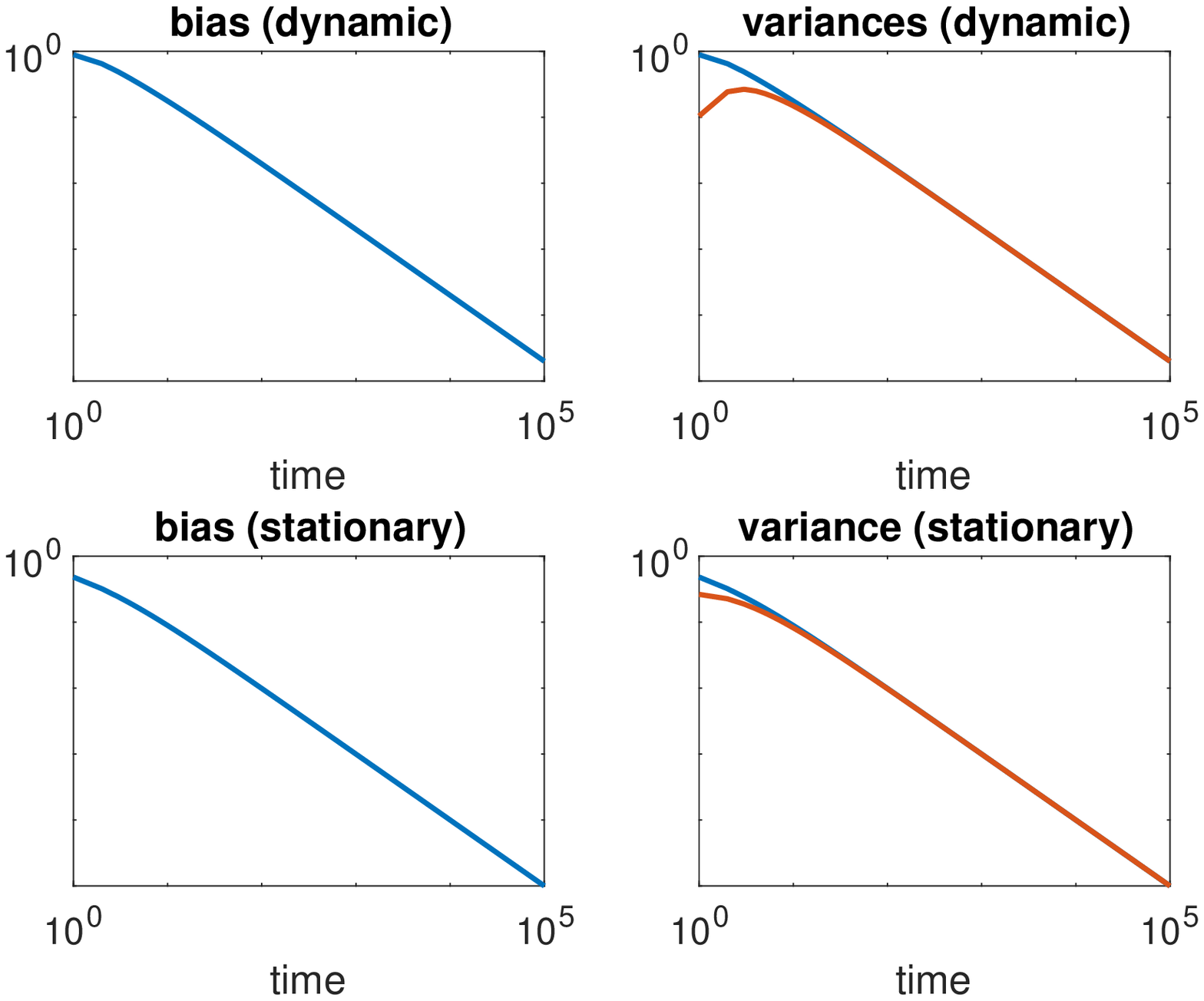} $\quad$
\includegraphics[width=0.45\textwidth]{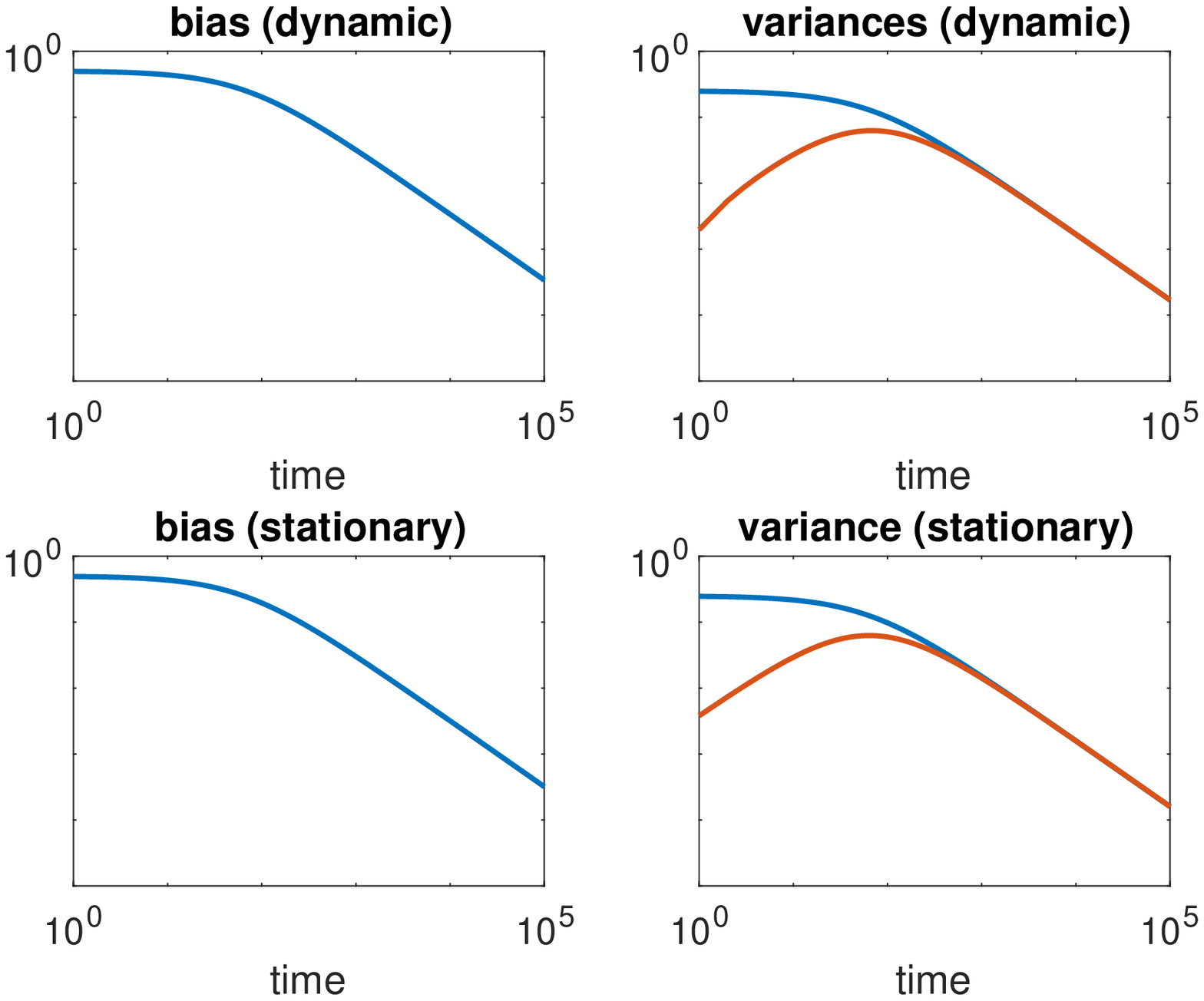}\\ \smallskip 
(a) $k=1$ $\qquad\qquad\qquad\qquad\qquad \qquad \qquad $ (b) $k=2$\\ \medskip
\includegraphics[width=0.45\textwidth]{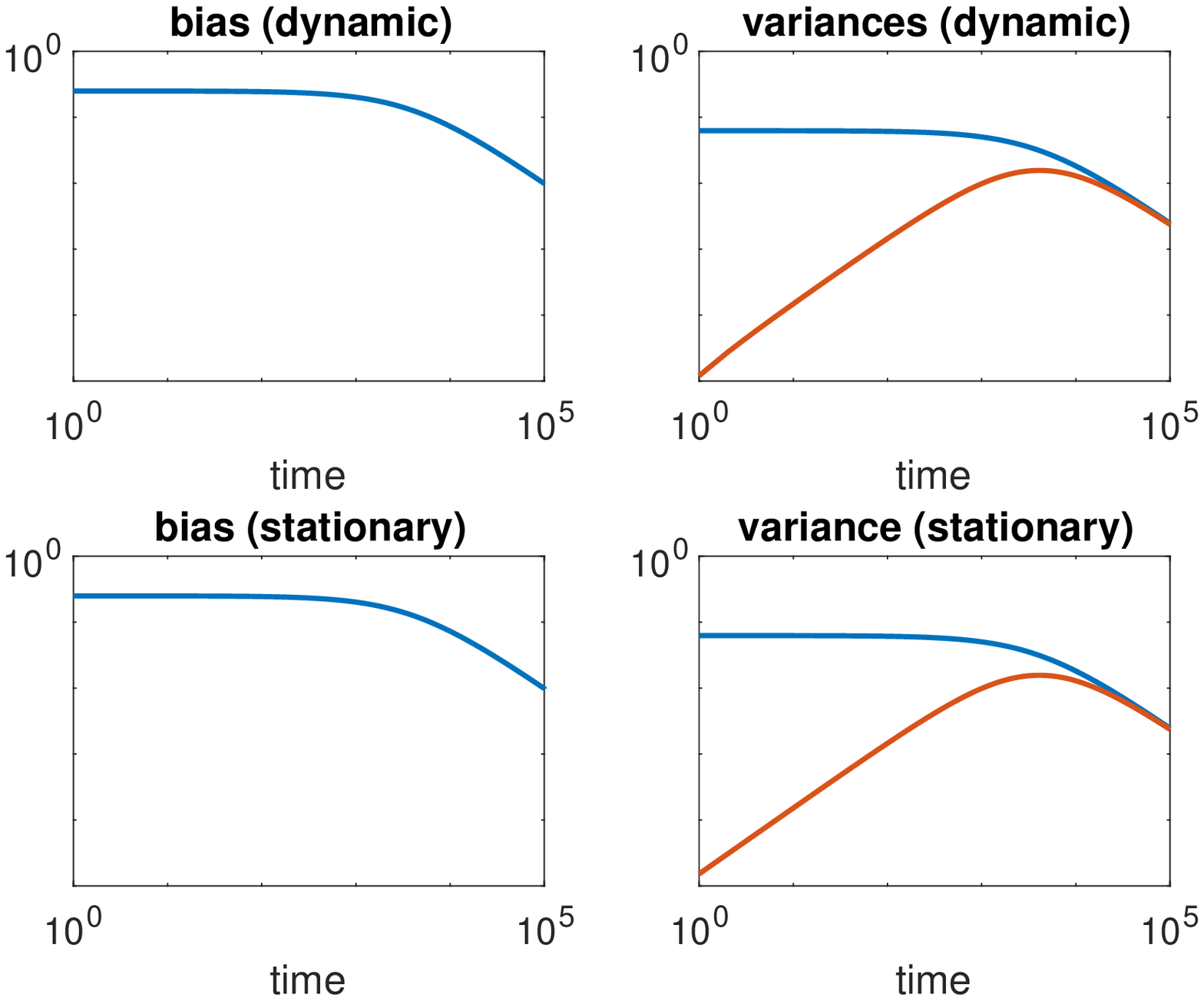} $\quad$
\includegraphics[width=0.45\textwidth]{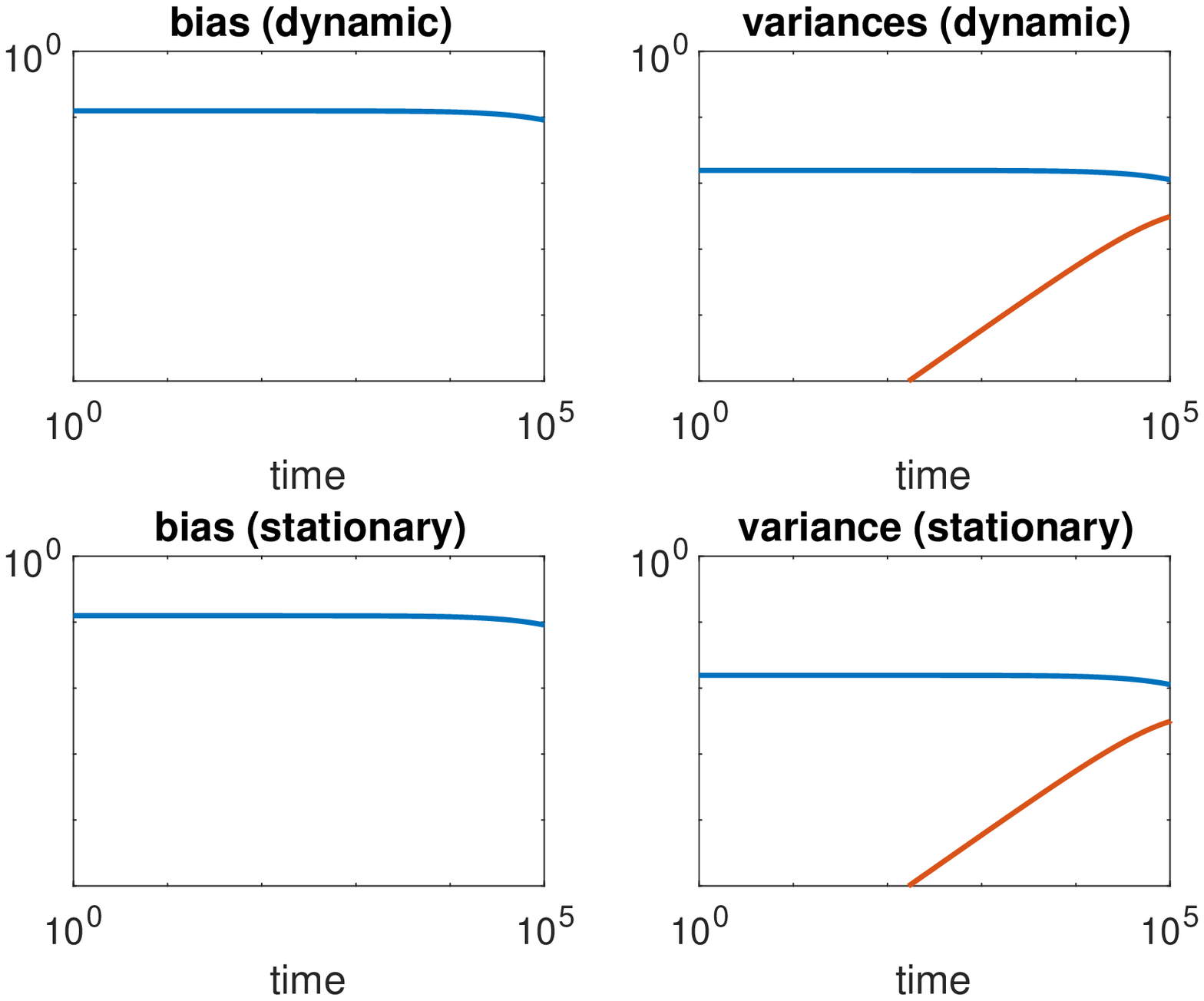}\\ \smallskip 
(c) $k=4$ $\qquad\qquad\qquad\qquad\qquad \qquad \qquad $ (d) $k=8$
\end{center}
\caption{We display the time evolution of the bias $|m_t^f(k)-\mathcal{F}^\ast(k)|$ and the variances
$\sigma_t^f(k)$ and $p_t^f(k)$ for increasing values of $k$. We compare the results from the combined
state and parameter estimation problem (labelled \lq dynamic\rq ) with those from the associated direct inference problem
in the parameter alone (labelled \lq stationary\rq ). The delay in the onset of the asymptotic $t^{-1}$ regime as $k$ 
increases can be clearly seen as well as that $p_t^f(k) < \sigma_t^f(k)$ for all $t>0$. Furthermore, as theoretically predicted, the dynamic and stationary problem formulations behave almost identically in terms of their drift estimates.}
\label{figure1}
\end{figure}

We numerically implemented the evolution equations (\ref{eq:MFKB_mean_mean}), (\ref{eq:MFKB_deviations_2}), and (\ref{eq:MFKB_mean_deviations_2}) for the Bayesian means, the 
Bayesian variances, and the frequentist variances, respectively, for different Fourier modes $k$. We also implemented the corresponding equations
from Section \ref{sec:drift} for the pure drift estimation problem. Simulations were run with $\rho = \gamma = 1$.
The true reference value was set to $\mathcal{F}^\ast (k) = k^{-\beta - 1/2}$ with $\beta = 1/2$, and we used $\alpha = 1/2$ for
the prior variance (\ref{eq:prior_f}). 

The results can be found in Figure \ref{figure1}. They reveal a very similar behaviour for the combined state
and drift estimation and the pure drift estimation problems. The delay in the onset of the asymptotic $t^{-1}$
regime as $k$ increases can also be seen. Overall, this simple numerical experiment confirms our theoretical
investigations with regard to the dynamical behaviour in each Fourier mode from Section \ref{sec:state_drift}.

%
%
\section{Conclusions} \label{sec:conclusions}

We have provided an analysis of the infinite dimensional Kalman--Bucy filter mean-field equations (\ref{eq:MFKB}) for a combined state and drift estimation problem defined in spectral space by (\ref{eq:model1}) and (\ref{eq:model2}). The derived asymptotic posterior contraction rates in the unknown drift function $f^\ast$ from Theorem \ref{theorem2} agree with those derived for an associated stationary problem formulation in Theorem \ref{theorem1}. These theoretical findings imply that the required additional estimation of the states does not lead to a deterioration of the contraction rates, which has been confirmed numerically in Section \ref{sec:numerics}. An extension to nonlinear and non-Gaussian estimation problems for SPDEs and different types of observation processes can be envisioned through nonlinear extensions of the Kalman--Bucy mean-field equations. See, for example, \cite{nrr2019}. Furthermore, our results can be combined with those from  \cite{GNSUl2017} to study the coverage probabilities of Bayesian credible sets in a non-asymptotic regime. One could also discuss adaptive choices for the prior $F_0(k)$. See, for example, \cite{knapik2016}.

\section*{Acknowledgements} This research has been partially funded by Deutsche Forschungsgemeinschaft (DFG) - SFB1294/1 - 318763901.

\section*{Appendix: Slow manifold approximation} \label{appendix}

Given a system of differential equations of the form
\begin{subequations} \label{eq:slow-fast}
\begin{align}
\frac{\rm d}{{\rm d}t} x_t &= - \epsilon^{-1} A x_t + f(x_t,y_t;\epsilon),\\
\frac{\rm d}{{\rm d}t} y_t &= g(x_t,y_t;\epsilon) \label{eq:slow-fast_b},
\end{align}
\end{subequations}
with $\epsilon>0$ sufficiently small and $A$ a symmetric positive definite matrix, there exists a smooth manifold 
$\mathcal{M}_\epsilon$ which is exponentially attractive and invariant under the dynamics of (\ref{eq:slow-fast}). 
See, for example, \cite{verhulst07,Shchepakina2014SingularPI} for details. 

Approximations of the slow manifold can be found by utilising the principle of bounded derivatives, that is, 
\begin{equation} \label{eq:bounded_derivative}
\frac{{\rm d}^n}{{\rm d}t^n} x_t = \mathcal{O}(\epsilon^0)
\end{equation}
for $n\ge 1$. More specifically, (\ref{eq:bounded_derivative}) with $n=1$ implies that
\begin{equation}
x_t^{(0)} = \mathcal{O}(\epsilon),
\end{equation}
while the same equation with $n=2$ leads to
\begin{equation}
\frac{{\rm d}}{{\rm d}t} x_t^{(1)} = \mathcal{O}(\epsilon)
\end{equation}
and, hence,
\begin{equation} \label{eq:slow_manifold}
x_t^{(1)} = \epsilon A^{-1} f(0,y_t;0) + \mathcal{O}(\epsilon^2) .
\end{equation}
Here the superscript indicates the order of the approximation.
Substituting the leading order term of (\ref{eq:slow_manifold}) into (\ref{eq:slow-fast_b}) provides a reduced dynamics 
in the $y$-variable alone. 
The next order correction is obtained from
\begin{equation}
\frac{\rm d^2}{{\rm d}t^2} x_t^{(2)} = -\epsilon^{-1} A \frac{\rm d}{{\rm d}t} x_t^{(2)} + 
D_1 f(x_t^{(2)},y_t;\epsilon)\frac{\rm d}{{\rm d}t} x_t^{(2)} + D_2 f(x_t^{(2)},y_t;\epsilon)\frac{\rm d}{{\rm d}t}y_t =
\mathcal{O}(\epsilon)
\end{equation}
and yields
\begin{equation}
x_t^{(2)} = \epsilon A^{-1} f(x_t^{(1)},y_t;\epsilon) + \epsilon^2 A^{-2} D_2 f(0,y_t;0) 
\frac{\rm d}{{\rm d}t} y_t + \mathcal{O}(\epsilon^3).
\end{equation}
Higher-order approximations to $\mathcal{M}_\epsilon$ can be obtained by exploiting (\ref{eq:bounded_derivative}) 
for $n> 3$. 

Note that (\ref{eq:MFKB_deviations_2}), for example, fits into the framework (\ref{eq:slow-fast}) 
with $x = (\sigma^u,\sigma^{uf})^{\rm T}$, $y = \sigma^f$, and 
\begin{equation}
A = \left( \begin{array}{cc} 2 &0\\0 & 1 \end{array} \right).
\end{equation}
Hence (\ref{eq:slow_manifold}) results in (\ref{eq:slow_manifold_approx}) and the evolution equation 
(\ref{eq:LT_deviations}) in the slow variable $y = \sigma^f$. The induced approximation error remains of order $\epsilon$ 
over time intervals of order $\epsilon^{-2}$.

What about $t\gg \epsilon^{-2}$? In this large time limit, the solution (\ref{eq:estimate_variance}) 
of (\ref{eq:LT_deviations}) satisfies
\begin{equation}
\frac{\rm d}{{\rm d}t} \sigma_t^f = \mathcal{O}(\epsilon^{-2} t^{-2})
\end{equation}
implying 
\begin{equation}
\frac{\rm d}{{\rm d}t} \sigma_t^{uf} = \mathcal{O}(\epsilon^{-1} t^{-2})
\end{equation}
along solutions on the slow manifold. In other words, it follows from (\ref{eq:MFKB_derivations_22}) that 
\begin{equation}
(1+\epsilon \rho^{-1} \sigma_t^u)\sigma_t^{uf} + \epsilon \sigma_t^f = \mathcal{O}(t^{-2})
\end{equation}
for $t \gg \epsilon^{-2}$. The resulting correction terms to the first-order balance relation 
$\sigma_t^{uf} = \epsilon \sigma_t^f$ do not alter the qualitative long-time solution behaviour of
the reduced slow equations (\ref{eq:LT_deviations}) as verified by the numerical results from Section 
\ref{sec:numerics}, where a $t^{-1}$ long-time decay rate in $\sigma_t^f$ has been observed.

%
%
\bibliographystyle{agsm}
\bibliography{bib_paper}
%
%


\end{document}